\documentclass[a4paper,english,fontsize=10pt,parskip=half,abstracton]{scrartcl}
\usepackage{babel}
\usepackage[utf8]{inputenc}
\usepackage[T1]{fontenc}
\usepackage[a4paper,left=20mm,right=20mm,top=30mm,bottom=30mm]{geometry}
\usepackage{amsmath}
\usepackage{amsthm}
\usepackage{amssymb}
\usepackage{enumerate}
\newtheorem{Lemma}{Lemma}
\newtheorem{Theorem}{Theorem}
\newtheorem{Proposition}{Proposition}

\renewcommand{\phi}{\varphi}
\newcommand{\C}{\operatorname{C}}
\newcommand{\N}{\operatorname{N}}
\newcommand{\Aut}{\operatorname{Aut}}
\newcommand{\pcore}{\operatorname{O}}

\mathchardef\ordinarycolon\mathcode`\:  
 \mathcode`\:=\string"8000
 \begingroup \catcode`\:=\active
   \gdef:{\mathrel{\mathop\ordinarycolon}}
 \endgroup

\title{Fusion systems on metacyclic $2$-groups}
\author{
Benjamin Sambale\\
Mathematisches Institut\\
Friedrich-Schiller-Universität\\
07743 Jena\\
Germany\\
{\tt benjamin.sambale@uni-jena.de}
}
\date{\today}

\begin{document}
\frenchspacing
\maketitle
\begin{abstract}
Let $P$ be a finite metacyclic $2$-group and $\mathcal{F}$ a fusion system on $P$. We prove that $\mathcal{F}$ is nilpotent unless $P$ has maximal class or $P$ is homocyclic, i.\,e. $P$ is a direct product of two isomorphic cyclic groups. As a consequence we obtain the numerical invariants for $2$-blocks with metacyclic defect groups. This paper is a part of the author's PhD thesis.
\end{abstract}

\section{Introduction}
The structure of a finite group $G$ is often controlled by the structure of its Sylow $p$-subgroups and the way they are embedded into $G$. In this paper, we give an example of such a control in the case $p=2$. More precisely, we show that $G$ is $2$-nilpotent if the Sylow $2$-subgroups of $G$ are metacyclic, but neither of maximal class nor homocyclic. Here, a homocyclic group means a direct product of two isomorphic cyclic groups. Moreover, the $2$-groups of maximal class are precisely the dihedral groups, the semidihedral groups, and the quaternion groups. Of course, there are many other metacyclic $2$-groups (see for example \cite{Liedahl}). In this sense, most of the metacyclic $2$-groups satisfy our assumption. In the easiest case where the Sylow $2$-subgroups of $G$ are cyclic, the result is a well-known theorem by Burnside. Another case, in which the result is known, is due to Wong. He showed that $G$ is $2$-nilpotent if the Sylow $2$-subgroups of $G$ are the sometimes called modular groups (see Satz~IV.3.5 in \cite{Huppert}). These are the nonabelian $2$-groups with cyclic subgroups of index $2$, which do not have maximal class. For similar results see \cite{pnilpotentforcing}. 

For the proof, we use the notion of fusion systems, which provide a great generalization of the situation described above. In particular, our result applies also to $2$-blocks of finite groups with metacyclic defect groups. This was already observed by Robinson without the notion of fusion systems (see \cite{Robinsonmetac}). As a consequence we state the numerical invariants of such blocks.
We note that David Craven and Adam Glesser also found the main result of this paper independently, but they did not publish it yet. Furthermore, Radu Stancu has studied fusion systems on metacyclic $p$-groups for odd primes $p$ (see \cite{Stancu}).

\section{Metacyclic $2$-groups}
First, we collect some elementary results about metacyclic $2$-groups. We will freely use the fact that subgroups and quotients of metacyclic groups are again metacyclic. 

\begin{Lemma}\label{autmetac}
If $P$ is a metacyclic $2$-group, then the automorphism group $\Aut(P)$ is also a $2$-group unless $P$ is homocyclic or a quaternion group of order $8$.
\end{Lemma}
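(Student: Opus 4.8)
The plan is to control $\Aut(P)$ through its action on the Frattini quotient $P/\Phi(P)$. First I would dispose of the cyclic case, where $\Aut(P)$ is well known to be a $2$-group, and assume from now on that $P$ is noncyclic; being $2$-generated, $P/\Phi(P)\cong\mathbb{F}_2^2$. Restriction yields a homomorphism $\Aut(P)\to\Aut(P/\Phi(P))\cong\operatorname{GL}_2(\mathbb{F}_2)\cong S_3$ whose kernel, the automorphisms acting trivially on $P/\Phi(P)$, is a $2$-group by a standard consequence of Burnside's basis theorem. Since $|S_3|=2\cdot3$, the group $\Aut(P)$ fails to be a $2$-group exactly when some $\phi\in\Aut(P)$ induces an element of order $3$ on $P/\Phi(P)$. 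Everything therefore reduces to proving that the existence of such a $\phi$ forces $P$ to be homocyclic or isomorphic to the quaternion group $Q_8$.

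Next I would extract structure from $\phi$ by coprime action of $\langle\phi\rangle\cong\mathbb{Z}/3$. As $P$ is metacyclic, $P'$ is cyclic, so $\Aut(P')$ is a $2$-group and $\phi$ restricts to the identity on $P'$. Since an element of order $3$ in $\operatorname{GL}_2(\mathbb{F}_2)$ fixes no nonzero vector, $\phi$ acts fixed-point-freely on $P/\Phi(P)$, and coprime action then shows that $\phi$ acts fixed-point-freely on the abelian group $P/P'$. Hence the relation $1+\phi+\phi^2=0$ holds on $P/P'$, turning it into a module over $\mathbb{Z}[\zeta_3]$; as $2$ stays prime there, the indecomposable summands have even rank, so $P/P'\cong(\mathbb{Z}/2^u)^2$ is homocyclic. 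If $P$ is abelian this says at once that $P$ is homocyclic, and we are done.

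The remaining nonabelian case is where I expect the real difficulty. Acting fixed-point-freely on $P/\Phi(P)\cong\mathbb{F}_2^2$, the automorphism $\phi$ cyclically permutes the three lines and hence the three maximal subgroups $M_1,M_2,M_3$ of $P$, so that $M_1\cong M_2\cong M_3$. The heart of the matter is then the following purely group-theoretic assertion: a metacyclic $2$-group whose three maximal subgroups are pairwise isomorphic is homocyclic or isomorphic to $Q_8$. I would prove this from a normal form for metacyclic $2$-groups (as in \cite{Liedahl}): one maximal subgroup always lies in a distinguished large cyclic subgroup while the others do not, and comparing invariants such as the exponent, the isomorphism type of $\Omega_1$, and the numbers of elements of each order forces a contradiction unless $P$ is homocyclic or has order $8$. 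Among the groups of order $8$ one sees directly that $D_8$ has both a cyclic and a noncyclic maximal subgroup, whereas all three maximal subgroups of $Q_8$ are cyclic of order $4$, so only $Q_8$ survives. The subtle point pervading the argument is the control of nonabelian metacyclic groups with a large derived subgroup, where knowing merely that $P/P'$ is homocyclic is insufficient and one genuinely needs the isomorphism of all three maximal subgroups.
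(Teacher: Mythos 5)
Your opening reductions are sound and, for what it is worth, already more than the paper provides: the paper does not prove this lemma but simply cites Lemma~1 of Mazurov. Passing to a noncyclic $P$, mapping $\Aut(P)$ onto a subgroup of $\Aut(P/\Phi(P))\cong S_3$ with $2$-group kernel, restricting an order-$3$ automorphism $\phi$ to the cyclic group $P'$ (where it must be trivial), deducing via coprime action that $\phi$ is fixed-point-free on $P/P'$, and concluding from the $\mathbb{Z}[\zeta_3]$-module structure that $P/P'$ is homocyclic --- all of this is correct and settles the abelian case.

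The gap is in the nonabelian case, and it is fatal to the route you propose. The ``purely group-theoretic assertion'' you reduce to --- that a metacyclic $2$-group whose three maximal subgroups are pairwise isomorphic must be homocyclic or a quaternion group of order $8$ --- is false. Take $P=\langle a,b\mid a^4=b^4=1,\ bab^{-1}=a^{-1}\rangle$ of order $16$. This group is metacyclic and nonabelian, $\Phi(P)=Z(P)=\langle a^2,b^2\rangle\cong C_2^2$, and since $(ab)^2=b^2$ and $b^2$ centralizes $a$, its three maximal subgroups $\langle a,b^2\rangle$, $\langle b,a^2\rangle$ and $\langle ab,a^2\rangle$ are all isomorphic to $C_4\times C_2$; yet $P$ is neither homocyclic nor isomorphic to $Q_8$. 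So no comparison of exponents, of $\Omega_1$, or of element-order statistics can produce the contradiction you are counting on from the isomorphism of the maximal subgroups alone. (This $P$ has $P/P'\cong C_2\times C_4$, so it is consistent with your earlier deduction that $P/P'$ is homocyclic --- but your reduction to the maximal-subgroup statement discards exactly that information, together with the automorphism itself.) To finish, you must keep $\phi$ in play in the nonabelian case: for instance, exploit simultaneously that $\phi$ centralizes the nontrivial cyclic group $P'$, acts fixed-point-freely on the homocyclic quotient $P/P'$, and satisfies $P=[P,\phi]$, and derive structural constraints on $P$ from there, rather than pass to an automorphism-free statement that is too weak to be true.
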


\begin{proof}
See Lemma~1 in \cite{Mazurov}.
\end{proof}

It is easy to show that the converse of Lemma~\ref{autmetac} also holds. In the following, we denote the cyclic group of order $n\in\mathbb{N}$ by $C_n$. Moreover, we set $C_n^2:=C_n\times C_n$. For a finite group $G$, we denote by $\Phi(G)$ the Frattini subgroup of $G$. If $H$ is a subgroup of $G$, then $\N_G(H)$ is the normalizer and $\C_G(H)$ is the centralizer of $H$ in $G$.

\begin{Lemma}\label{action}
Let $P$ be a metacyclic $2$-group and $C_{2^k}^2\cong Q<P$ with $k\ge 2$. Then the action of $\N_P(Q)/Q$ on $Q/\Phi(Q)$ by conjugation is not faithful.
\end{Lemma}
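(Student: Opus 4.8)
The plan is to show that a faithful action is impossible by cutting the problem down to a single troublesome case and then contradicting metacyclicity by a direct computation inside $\N_P(Q)$.

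First I would record the formal reduction. Since $Q\cong C_{2^k}^2$ is abelian it acts trivially on $Q/\Phi(Q)$, so the conjugation action factors through $\N_P(Q)/Q$, and $\Phi(Q)=Q^2$ gives $Q/\Phi(Q)\cong C_2^2$ with $\Aut(Q/\Phi(Q))\cong GL_2(\mathbb F_2)\cong S_3$. As $\N_P(Q)/Q$ is a $2$-group, its image in $S_3$ has order at most $2$; since $Q<P$ forces $Q<\N_P(Q)$, the quotient is nontrivial. Hence if the action were faithful we would have $|\N_P(Q)/Q|=2$. So set $H:=\N_P(Q)=\langle Q,x\rangle$ with $x^2\in Q$, and let $\sigma\in\Aut(Q)$ be conjugation by $x$. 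Because $Q$ is abelian, $\sigma^2$ is the inner automorphism of $x^2\in Q$, i.e.\ $\sigma^2=1$, so $\sigma$ is an involution in $\Aut(Q)\cong GL_2(\mathbb Z/2^k)$ which, by assumption, is nontrivial modulo $\Phi(Q)$.

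Next I would exploit that $H$, as a subgroup of the metacyclic group $P$, is itself metacyclic: choose a cyclic normal subgroup $Z\trianglelefteq H$ with $H/Z$ cyclic. As $Q\cap Z$ is a cyclic subgroup of $Q$ and $Q/(Q\cap Z)\cong QZ/Z$ embeds in the cyclic group $H/Z$, the quotient $Q/(Q\cap Z)$ is a cyclic quotient of $C_{2^k}^2$, so of order at most $2^k$; with $|Q\cap Z|\le 2^k$ this forces $|Q\cap Z|=2^k$. Thus $Q\cap Z\cong C_{2^k}$ is a cyclic direct factor of $Q$, say $Q=\langle c\rangle\times\langle d\rangle$ with $\langle c\rangle=Q\cap Z$. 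As $Q\cap Z\trianglelefteq H$ it is $\sigma$-invariant, so in the basis $(c,d)$ the automorphism $\sigma$ is upper triangular,
\[
\sigma=\begin{pmatrix} t & u\\ 0 & v\end{pmatrix},\qquad t,v\ \text{odd},
\]
and $\sigma^2=1$ reads $t^2\equiv v^2\equiv 1$ and $u(t+v)\equiv 0\pmod{2^k}$. Reducing modulo $\Phi(Q)$ gives $\bar\sigma=\left(\begin{smallmatrix}1&\bar u\\0&1\end{smallmatrix}\right)$, so the action is nontrivial precisely when $u$ is odd; this is the only case left to rule out.

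Finally I would derive a contradiction from "$u$ odd", splitting on $|Z|\in\{2^k,2^{k+1}\}$ (the only options, as $Z/(Q\cap Z)\hookrightarrow H/Q\cong C_2$). A useful preliminary is that $x^2\in Q$ is centralized by $x$, hence lies in $\operatorname{Fix}(\sigma)=\ker(\sigma-1)$; when $u$ is odd a one-line computation shows every element of $\operatorname{Fix}(\sigma)$ has \emph{even} $d$-exponent. If $|Z|=2^k$ then $Z=\langle c\rangle$, and $H/Z$ cyclic forces it abelian, whence $v=1$; cyclicity of $H/\langle c\rangle$ then demands that $x^2$ have odd $d$-exponent modulo $\langle c\rangle$, contradicting the parity just noted. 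If $|Z|=2^{k+1}$ then $Z=\langle z\rangle$ with $z\in Qx$ of order $2^{k+1}$ and $\langle z^2\rangle=\langle c\rangle$; normality of $\langle z\rangle$ again forces $v=1$, hence $t\equiv-1$ from $u(t+1)\equiv0$, and squaring $z=c^id^jx$ together with $x^2\in\operatorname{Fix}(\sigma)$ makes a parity analysis force the $c$-exponent of $z^2$ to be even, so $\langle z^2\rangle\ne\langle c\rangle$ — a contradiction. Both branches use $k\ge2$ (so that $2^{k-1}$ is even), which is exactly the hypothesis; indeed for $k=1$ the statement genuinely fails, e.g.\ for $D_8$. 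I expect this last parity bookkeeping over $\mathbb Z/2^k$, and in particular the control of $x^2$, to be the main obstacle, everything preceding it being formal.
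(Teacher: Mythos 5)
Your proof is correct, but it takes a genuinely different route from the paper's. Both arguments begin with the same reduction (faithfulness forces $|\N_P(Q):Q|=2$) and both exploit metacyclicity through a cyclic normal subgroup with cyclic quotient, but they diverge after that. The paper notes that $\N_P(Q)/\Phi(Q)$ is dihedral of order $8$, deduces that the metacyclic series can be taken as $\langle x\rangle\unlhd \N_P(Q)$ with $x\notin Q$ and $x^2\in Q$, and then shows that $\Phi(Q)$ centralizes $x$ because the restriction map $\Aut(\langle x\rangle)\to\Aut(\langle x^2\rangle)$ has kernel of order $2$; Burnside's basis theorem then gives $Q=\langle y,xyx^{-1}\rangle$ and hence $\Phi(Q)=\langle y^2,xy^2x^{-1}\rangle=\langle y^2\rangle$ cyclic, contradicting $k\ge 2$. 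You instead intersect the cyclic normal subgroup $Z$ with $Q$ to produce a $\sigma$-invariant cyclic direct factor $\langle c\rangle$, linearize $\sigma$ as an upper triangular matrix over $\mathbb{Z}/2^k\mathbb{Z}$, and finish by congruence arithmetic in the two cases $|Z|\in\{2^k,2^{k+1}\}$. I checked the parity bookkeeping you flagged as the main risk and it goes through: fixed points of $\sigma$ have even $d$-exponent because $u$ is odd; in the first case cyclicity of $H/\langle c\rangle$ of order $2^{k+1}$ does force $x^2$ to have odd $d$-exponent modulo $\langle c\rangle$; and in the second case normality of $\langle z\rangle$ gives $v=1$, hence $t\equiv-1\pmod{2^k}$, and the $c$-exponent of $z^2$ comes out $\equiv 0\pmod{2^{k-1}}$, hence even for $k\ge 2$, so $\langle z^2\rangle\ne\langle c\rangle$. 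The paper's argument is shorter and coordinate-free; yours costs a case split and more arithmetic but makes completely explicit where the hypothesis $k\ge 2$ enters and why $D_8$ escapes the statement when $k=1$.
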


\begin{proof}
By way of contradiction, we assume that $\N_P(Q)/Q$ acts faithfully on $Q/\Phi(Q)$. Hence $|\N_P(Q):Q|=2$, since $|\Aut(Q/\Phi(Q))|=6$. For simplicity, we may also assume $P=\N_P(Q)$.
Then $P/\Phi(Q)$ is a dihedral group of order $8$. Let $\langle x\rangle\unlhd P$ such that $P/\langle x\rangle$ is cyclic. Then $\langle x\rangle\Phi(Q)/\Phi(Q)$ and $(P/\Phi(Q))/(\langle x\rangle\Phi(Q)/\Phi(Q))\cong P/\langle x\rangle\Phi(Q)$ are also cyclic. Since a dihedral group of order $8$ cannot have a cyclic quotient of higher order than $2$, this shows $|P/\langle x\rangle\Phi(Q)|=2$. Since $Q/\Phi(Q)$ is noncyclic, $x\notin Q$.
The restriction map $\phi:\Aut(\langle x\rangle)\to\Aut(\langle x^2\rangle)$ is an epimorphism with kernel of order $2$. The action of $Q$ on $\langle x\rangle$ induces a homomorphism $\psi:Q\to\Aut(\langle x\rangle)$ with image contained in $\ker(\phi)$, because $x^2\in Q$. Since $\ker(\phi)$ has order $2$, $\Phi(Q)$ is contained in $\ker(\psi)$. In particular $x$ centralizes $\Phi(Q)$. By Burnside's basis theorem, there exists an element $y\in Q$ such that $Q=\langle y,xyx^{-1}\rangle$. Therefore $\Phi(Q)=\langle y^2,xy^2x^{-1}\rangle=\langle y^2\rangle$ is cyclic. But this contradicts $k\ge 2$.
\end{proof}

In the situation of Lemma~\ref{action} one can also show that $Q$ is the only subgroup of type $C_{2^k}^2$ for a fixed $k$. In particular $\N_P(Q)=P$. However, we will not need this in the following.

\section{Fusion systems}
In this section, we will use the definitions and results of \cite{Linckelmann} to prove the main result of this paper. Moreover, we say that a fusion system $\mathcal{F}$ on a finite $p$-group $P$ is \emph{nilpotent} if $\Aut_{\mathcal{F}}(Q)$ is a $p$-group for every $Q\le P$ (cf. Theorem~3.11 in \cite{Linckelmann}). 

\begin{Theorem}\label{mainthm}
Let $P$ be a metacyclic $2$-group, which is neither of maximal class nor homocyclic. Then every fusion system $\mathcal{F}$ on $P$ is nilpotent.
\end{Theorem}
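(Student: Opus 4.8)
The plan is to reduce the whole statement, via Alperin's fusion theorem, to a question about a short list of subgroups. First I would record that, since $P$ is neither homocyclic nor a quaternion group of order $8$ (the latter having maximal class and hence being excluded), Lemma~\ref{autmetac} shows that $\Aut(P)$ is a $2$-group. As $\Aut_{\mathcal F}(P)\le\Aut(P)$ and, by saturation, $\Inn(P)=\Aut_P(P)$ is a Sylow $2$-subgroup of $\Aut_{\mathcal F}(P)$, this forces $\Aut_{\mathcal F}(P)=\Inn(P)$. By Alperin's fusion theorem every morphism of $\mathcal F$ is a composite of restrictions of $\mathcal F$-automorphisms of $P$ and of the \emph{essential} subgroups of $\mathcal F$. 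Hence, once I show that $\mathcal F$ has no essential subgroups, I obtain $\mathcal F=\mathcal F_P(P)$, which is nilpotent. So the entire theorem reduces to showing that \emph{$P$ has no $\mathcal F$-essential subgroup}.

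Next I would pin down the isomorphism type of a hypothetical essential subgroup $Q$. By definition $\Out_{\mathcal F}(Q)$ has a strongly $2$-embedded subgroup, so it is not a $2$-group; in particular $\Aut(Q)$ is not a $2$-group. Since $Q\le P$ is again metacyclic, Lemma~\ref{autmetac} leaves only the possibilities $Q\cong C_{2^k}^2$ for some $k\ge1$ or $Q\cong Q_8$. Thus it remains to eliminate the three families $k\ge2$ homocyclic, $Q\cong C_2^2$, and $Q\cong Q_8$. For $Q\cong C_{2^k}^2$ with $k\ge2$ I would invoke Lemma~\ref{action}: here $Q$ is abelian and $\mathcal F$-centric, so $\C_P(Q)=Q$, $\Inn(Q)=1$, $\Out_{\mathcal F}(Q)=\Aut_{\mathcal F}(Q)$, and $\Aut_P(Q)=\N_P(Q)/Q$. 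The kernel of the reduction $\Aut(Q)=GL_2(\mathbb Z/2^k)\to\Aut(Q/\Phi(Q))=GL_2(2)$ is a normal $2$-subgroup of $\Aut(Q)$, and Lemma~\ref{action} says $\Aut_P(Q)$ meets this kernel nontrivially. Intersecting with $\Aut_{\mathcal F}(Q)$ yields a nontrivial normal $2$-subgroup, so $\pcore_2(\Out_{\mathcal F}(Q))\ne1$; but an essential subgroup is $\mathcal F$-radical, a contradiction.

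The remaining cases $Q\cong C_2^2$ and $Q\cong Q_8$ are the main obstacle, and for them only the $\mathcal F$-centric condition $\C_P(Q)=Z(Q)$ is needed. I would fix a normal cyclic subgroup $A=\langle a\rangle\unlhd P$ of order $2^m$ with $P/A$ cyclic and examine $Q\cap A$. Since $A$ is cyclic, $Q\cap A$ is cyclic and $Q/(Q\cap A)\cong QA/A$ embeds into the cyclic group $P/A$; this immediately kills the cases where $Q\cap A$ is too small, because $C_2^2$ has a noncyclic proper quotient and $Q_8$ a nonabelian one. In the surviving case there is an involution (for $C_2^2$), respectively an element squaring to the central involution (for $Q_8$), lying outside $A$; the condition $\C_P(Q)=Z(Q)$ forces it to act on $A$ with fixed-point subgroup of order $2$. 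A short computation then gives the action $a\mapsto a^r$ with $r\equiv-1\pmod4$, i.e. $r\in\{-1,\,2^{m-1}-1\}$.

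The crucial point is that both of these values are \emph{non-squares} in $(\mathbb Z/2^m)^\times=\langle-1\rangle\times\langle5\rangle$. Since $P/A$ is cyclic and its generator acts through a cyclic group of automorphisms of $A$, a non-square value of $r$ forces the relevant coset to generate $P/A$, whence $|P/A|=2$. Then $P=\langle A,\cdot\rangle$ is dihedral, semidihedral, or (in the $Q_8$ case) generalized quaternion, always of maximal class, contrary to hypothesis. This contradiction eliminates the last essential candidates and, with the reduction in the first paragraph, proves the theorem. I expect the delicate part to be exactly this final structural analysis: converting ``$\C_P(Q)=Z(Q)$'' into an explicit faithful action on the normal cyclic subgroup $A$ and extracting the maximal-class conclusion from the non-square property of the resulting automorphism.
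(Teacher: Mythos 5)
Your proposal is correct, and its skeleton coincides with the paper's: both arguments reduce the theorem, via Alperin's fusion theorem together with Lemma~\ref{autmetac} applied to $P$, to showing that $\mathcal{F}$ has no essential subgroup; both apply Lemma~\ref{autmetac} again to force a hypothetical essential $Q$ to be $C_{2^k}^2$ or $Q_8$; and both settle $k\ge 2$ exactly as you do, by combining Lemma~\ref{action} with the fact that the kernel of $\Aut_{\mathcal{F}}(Q)\to\Aut(Q/\Phi(Q))$ is a normal $2$-subgroup, contradicting radicality. Where you genuinely diverge is in the residual cases $Q\cong C_2^2$ and $Q\cong Q_8$. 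The paper disposes of these by citation: Proposition~10.19 in \cite{Berkovich1} excludes quaternion essential subgroups, and for $C_2^2$ Exercise~1.85 in \cite{Berkovich1} gives $Q=\Omega(P)\unlhd P$, so that $P/Q\hookrightarrow\Aut(Q)$ forces $|P|=8$, a contradiction. You instead give a self-contained structural argument: fixing $A=\langle a\rangle\unlhd P$ cyclic of order $2^m$ with $P/A$ cyclic, the intersection $Q\cap A$ must be $C_2$ resp.\ $C_4$ since $Q/(Q\cap A)$ is cyclic; centricity then forces an element $t\in Q\setminus A$ to satisfy $\C_A(t)=Z(Q)\cap A$ of order $2$, hence to act as $a\mapsto a^r$ with $r\equiv -1\pmod 4$; such $r$ is a non-square in $(\mathbb{Z}/2^m)^\times$, while the image of $P/A$ in $\Aut(A)$ is cyclic, so $tA$ generates $P/A$, and $t^2\in A$ gives $|P:A|=2$, identifying $P$ as dihedral, semidihedral or generalized quaternion and contradicting the hypothesis. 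This is sound: the tacit ingredients ($r^2\equiv 1\pmod{2^m}$ because $t^2\in A$ acts trivially, and the classification of $2$-groups with a cyclic subgroup of index $2$) are standard, and the only case you should add a line for is the degenerate one $|A|=2$, where $A\le Z(P)$ and $P/A$ cyclic make $P$ abelian, so $\C_P(Q)=Q$ forces $P=Q$ homocyclic. In exchange for about a page of explicit computation, your route removes both appeals to Berkovich's book; the paper's route is shorter but not self-contained at exactly these two points.
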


\begin{proof}
In order to show that $P$ does not contain $\mathcal{F}$-essential subgroups, let $Q\le P$ be $\mathcal{F}$-essential. Since $\Aut_{\mathcal{F}}(Q)/\Aut_Q(Q)$ has a strongly $2$-embedded subgroup, $\Aut(Q)$ is not a $2$-group. By the Lemma~\ref{autmetac},
$Q$ is homocyclic or a quaternion group of order $8$. By Proposition~10.19 in \cite{Berkovich1}, $Q$ cannot be a quaternion group.
Thus, we may assume $Q\cong C_{2^k}^2$ for some $k\in\mathbb{N}$. Moreover, $\C_P(Q)=Q$ holds, because $Q$ is also $\mathcal{F}$-centric. First, we consider the case $k=1$. Then $Q\subseteq\langle x\in P:x^2=1\rangle=\Omega(P)$, and Exercise~1.85 in \cite{Berkovich1} implies $Q=\Omega(P)\unlhd P$. Since $P/Q=\N_P(Q)/\C_P(Q)$ is isomorphic to a subgroup of $\Aut(Q)$, $P$ has order $8$. This contradicts our hypothesis. Hence, we have $k\ge 2$. Now consider the action of $\Aut_{\mathcal{F}}(Q)=\Aut_{\mathcal{F}}(Q)/\Aut_Q(Q)$ on $Q/\Phi(Q)$. Lemma~\ref{action} shows that $1\ne\Aut_P(Q)\subseteq\Aut_{\mathcal{F}}(Q)$ does not act faithfully. On the other hand, it is well-known that every automorphism of $Q$ of odd order acts nontrivially on $Q/\Phi(Q)$. Therefore the kernel of the action under consideration forms a nontrivial normal $2$-subgroup of $\Aut_{\mathcal{F}}(Q)$, i.\,e. $\pcore_2(\Aut_{\mathcal{F}}(Q))\ne 1$. But this contradicts the fact that $\Aut_{\mathcal{F}}(Q)$ contains a strongly $2$-embedded subgroup.

Now let $Q$ be an arbitrary subgroup of $P$. We have to show that $\Aut_{\mathcal{F}}(Q)$ is a $2$-group. Let $\phi\in\Aut_{\mathcal{F}}(Q)$. Then Alperin's fusion theorem (Theorem~5.2 in \cite{Linckelmann}) shows that $\phi$ is the restriction of an automorphism of $P$. But again by Lemma~\ref{autmetac}, $\Aut(P)$ is a $2$-group, and $\phi$ must be a $2$-element. This proves the theorem.
\end{proof}

The next statement is in some sense a converse of Theorem~\ref{mainthm}.

\begin{Proposition}
Let $P$ be a $2$-group of maximal class or a homocyclic $2$-group. Then there exists a fusion system on $P$, which is not nilpotent.
\end{Proposition}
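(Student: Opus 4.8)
The plan is to exploit the fusion-theoretic form of Frobenius's normal $2$-complement theorem. For any finite group $G$ with Sylow $2$-subgroup $P$, the fusion system $\mathcal{F}_P(G)$ is saturated, and I claim it is non-nilpotent whenever $G$ is not $2$-nilpotent: if $\mathcal{F}_P(G)$ were nilpotent, then $\N_G(Q)/\C_G(Q)=\Aut_{\mathcal{F}_P(G)}(Q)$ would be a $2$-group for every $Q\le P$, hence (as every $2$-subgroup of $G$ is $G$-conjugate into $P$) for every $2$-subgroup of $G$, and Frobenius's theorem would produce a normal $2$-complement. Thus it suffices, for each group $P$ in the statement, to realise $P$ as a Sylow $2$-subgroup of a finite group $G$ that is \emph{not} $2$-nilpotent; then $\mathcal{F}_P(G)$ is the desired non-nilpotent fusion system.

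I would treat the four families separately. If $P\cong C_{2^n}^2$ is homocyclic, I choose an element of order $3$ in $\Aut(P)\cong\operatorname{GL}_2(\mathbb{Z}/2^n\mathbb{Z})$ — for instance the companion matrix $M$ of $t^2+t+1$, which satisfies $M^2+M+I=0$ and hence $M^3=I$ over any ring, and acts nontrivially on $P/\Phi(P)$ — and set $G:=P\rtimes\langle M\rangle$. Here $P$ is a normal Sylow $2$-subgroup with $\Aut_{\mathcal{F}_P(G)}(P)=G/P\cong C_3$, which is already not a $2$-group. For the three maximal-class families I would realise $P$ inside a rank-one group of Lie type: dihedral groups as Sylow $2$-subgroups of $\operatorname{PSL}_2(q)$, generalised quaternion groups as Sylow $2$-subgroups of $\operatorname{SL}_2(q)$, and semidihedral groups as Sylow $2$-subgroups of $\operatorname{GL}_2(q)$ with $q\equiv 3\pmod 4$. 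Each of these groups fails to be $2$-nilpotent: the first two contain the perfect group $\operatorname{SL}_2(q)$, and already $\operatorname{SL}_2(3)\le\operatorname{GL}_2(3)$ precludes a normal $2$-complement.

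The remaining point is purely arithmetic: matching the orders. Since $(q^2-1)_2=2\cdot\max\{(q-1)_2,(q+1)_2\}$ for odd $q$, the dihedral Sylow subgroup of $\operatorname{PSL}_2(q)$ has order $\max\{(q-1)_2,(q+1)_2\}$ and the quaternion Sylow subgroup of $\operatorname{SL}_2(q)$ has twice that order; in the semidihedral case $|\operatorname{GL}_2(q)|_2=4\,(q+1)_2$ for $q\equiv 3\pmod 4$. Prescribing the $2$-part of $q\mp 1$ by a congruence modulo a suitable power of $2$ and invoking Dirichlet's theorem on primes in arithmetic progressions yields a prime $q$ giving each prescribed order: this covers all $n\ge 3$ in the dihedral and quaternion cases and all $n\ge 4$ in the semidihedral case, with $SD_{16}$ arising already from $\operatorname{GL}_2(3)$.

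I expect the main obstacle to be the semidihedral family: one must confirm that the Sylow $2$-subgroups of $\operatorname{GL}_2(q)$ for $q\equiv 3\pmod 4$ are genuinely semidihedral (not merely of maximal class) of the right order, which comes down to analysing how the Galois/Weyl involution acts on the $2$-part of a non-split torus. An alternative that sidesteps any classification of realising groups is to build each fusion system abstractly: in the dihedral and semidihedral cases designate a Klein four subgroup $V\le P$ as the unique $\mathcal{F}$-essential subgroup with $\Aut_{\mathcal{F}}(V)=\Aut(V)\cong S_3$, and in the quaternion case use the $Q_8$ subgroup carrying an order-$3$ automorphism; there the difficulty shifts to verifying the saturation axioms directly.
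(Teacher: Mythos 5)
Your reduction is exactly the paper's: a nilpotent fusion system $\mathcal{F}_P(G)$ forces $\N_G(Q)/\C_G(Q)$ to be a $2$-group for every $2$-subgroup $Q$, hence $G$ is $2$-nilpotent by Frobenius, so it suffices to realise $P$ as a Sylow $2$-subgroup of a non-$2$-nilpotent finite group. The paper at this point simply cites Waall's paper for the existence of such realisations, while you supply them explicitly ($C_{2^n}^2\rtimes C_3$, $\operatorname{PSL}_2(q)$, $\operatorname{SL}_2(q)$, and $\operatorname{GL}_2(q)$ with $q$ chosen via Dirichlet); these are the standard realisations and your argument is correct.
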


\begin{proof}
It suffices to show that there exists a finite group $G$ with $P$ as a Sylow $2$-subgroup such that $G$ is not $2$-nilpotent.
If $P$ is homocyclic, then the claim follows from Theorem~1.10 in \cite{pnilpotentforcing}. If $P$ has maximal class, then Theorem~2.4, Theorem~2.5, and Theorem~2.6 in \cite{pnilpotentforcing} imply the result.
\end{proof}

\section{$2$-blocks of finite groups}
Now we turn to blocks. Let $G$ be a finite group, and let $B$ be a $2$-block of $G$. We denote the number of irreducible ordinary (modular) characters of $B$ by $k(B)$ ($l(B)$) respectively. Further, we define $k_i(B)$ as the number of irreducible ordinary characters of height $i\in\mathbb{N}_0$. It is well known that the so called subpairs for $B$ provide a fusion system on a defect group $D$ of $B$. Let us assume that $D$ is metacyclic. If $D$ has maximal class, the numbers $k(B)$, $k_i(B)$ and $l(B)$ were obtained by Brauer and Olsson (see \cite{Brauer,Olsson}). In the case $D\cong C_{2^n}^2$ for some $n\in\mathbb{N}$ the inertial index $e(B)$ of $B$ is $1$ or $3$. For $e(B)=1$ the fusion system (and the block) has to be nilpotent. Thus, we may assume $e(B)=3$. Then Usami and Puig state (without an explicit proof) that there exists a perfect isometry between $B$ and the group algebra of $D\rtimes C_3$ (see \cite{Usami23I,Usami23II,UsamiZ4}). The author verified this as a part of his PhD thesis. In particular the numbers $k(B)$, $k_i(B)$ and $l(B)$ can be calculated easily. 
In all other cases for $D$ the block $B$ has to be nilpotent be Theorem~\ref{mainthm}. In this case a theorem by Puig applies (see \cite{Puig}). We summarize all these results:

\begin{Theorem}\label{2blocks}
Let $B$ be a $2$-block of a finite group $G$ with a metacyclic defect group $D$. Then one of the following holds:
\begin{enumerate}[(1)]
\item $B$ is nilpotent. Then $k_i(B)$ is the number of ordinary characters of $D$ of degree $2^i$. In particular $k(B)$ is the number of conjugacy classes of $D$ and $k_0(B)=|D:D'|$. Moreover, $l(B)=1$.
\item $D$ is a dihedral group of order $2^n\ge 8$. Then $k(B)=2^{n-2}+3$, $k_0(B)=4$ and $k_1(B)=2^{n-2}-1$. According to two different fusion systems, $l(B)$ is $2$ or $3$.
\item $D$ is a quaternion group of order $8$. Then $k(B)=7$, $k_0(B)=4$ and $k_1(B)=l(B)=3$.
\item $D$ is a quaternion group of order $2^n\ge 16$. Then $k_0(B)=4$ and $k_1(B)=2^{n-2}-1$. According to two different fusion systems, one of the following holds
\begin{enumerate}[(a)]
\item $k(B)=2^{n-2}+4$, $k_{n-2}(B)=1$ and $l(B)=2$.
\item $k(B)=2^{n-2}+5$, $k_{n-2}(B)=2$ and $l(B)=3$. 
\end{enumerate}
\item $D$ is a semidihedral group of order $2^n\ge 16$. Then $k_0(B)=4$ and $k_1(B)=2^{n-2}-1$.
According to three different fusion systems, one of the following holds
\begin{enumerate}[(a)]
\item $k(B)=2^{n-2}+3$ and $l(B)=2$.
\item $k(B)=2^{n-2}+4$, $k_{n-2}(B)=1$ and $l(B)=2$.
\item $k(B)=2^{n-2}+4$, $k_{n-2}(B)=1$ and $l(B)=3$.
\end{enumerate}
\item $D$ is homocyclic. Then $k(B)=k_0(B)=\frac{|D|+8}{3}$ and $l(B)=3$.
\end{enumerate}
\end{Theorem}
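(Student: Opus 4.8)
\section*{Proof proposal for Theorem~\ref{2blocks}}

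The plan is to run through the classification of metacyclic $2$-groups and to treat the fusion system $\mathcal{F}$ of $B$ on the defect group $D$ separately in each case, reading off $k(B)$, $k_i(B)$ and $l(B)$ from structural theorems already available in the literature. Recall that a metacyclic $2$-group $D$ is either of maximal class (hence dihedral, quaternion, or semidihedral), or homocyclic, or neither of these; these three alternatives correspond exactly to the case division of the theorem, since a perfect isometry as well as a source-algebra equivalence preserves all three of the invariants under consideration.

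First I would dispose of the nilpotent situation, which yields item~(1). Suppose $\mathcal{F}$ is nilpotent. By Theorem~\ref{mainthm} this holds automatically whenever $D$ is neither of maximal class nor homocyclic, and it also holds in the homocyclic case when the inertial index $e(B)$ equals $1$: indeed $D$ is then abelian, so $\mathcal{F}$ has no $\mathcal{F}$-essential subgroups and is controlled by $\Aut_{\mathcal{F}}(D)$, which is a $2$-group precisely when $e(B)=1$. In either case Puig's theorem on nilpotent blocks (see \cite{Puig}) shows that the source algebra of $B$ is a matrix algebra over $\mathcal{O}D$, so that $B$ is Morita equivalent to $\mathcal{O}D$ in a way that preserves character heights. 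Hence the invariants of $B$ coincide with those of the group algebra of $D$: one has $l(B)=1$, the number $k_i(B)$ equals the number of irreducible characters of $D$ of degree $2^i$, so that $k_0(B)=|D:D'|$ and $k(B)$ is the number of conjugacy classes of $D$. This is item~(1).

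Next come the groups of maximal class, giving items~(2)--(5). Here the possible fusion systems on a dihedral, quaternion, or semidihedral group are known, and for each Brauer and Olsson determined the block invariants (see \cite{Brauer,Olsson}); I would cite their computations directly, splitting off the quaternion group of order $8$ as the small special case~(3). The sub-cases (a), (b), (c) reflect the several fusion systems realisable on a fixed maximal-class group, together with the corresponding values of $l(B)$ and of the height distribution.

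It remains to treat the homocyclic case $D\cong C_{2^n}^2$, where I expect the main obstacle. Since $3$ is the only odd prime dividing $|\Aut(D)|$ and its Sylow $3$-subgroup is cyclic of order $3$, the inertial index $e(B)$, being the order of a $2'$-subgroup of $\Aut(D)$, is $1$ or $3$; the value $1$ was already absorbed into item~(1). For $e(B)=3$ I would invoke the perfect isometry between $B$ and $\mathcal{O}[D\rtimes C_3]$ due to Usami and Puig and verified by the author (see \cite{Usami23I,Usami23II,UsamiZ4}), where the generator $\sigma$ of $C_3$ acts fixed-point-freely on $D$. Because a perfect isometry preserves $k(B)$, the numbers $k_i(B)$ and $l(B)$, it then suffices to compute these for the explicit soluble group $G=D\rtimes C_3$. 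This is an elementary conjugacy-class count: fixed-point-freeness makes $d\mapsto d\,\sigma(d)^{-1}$ a bijection of $D$, so each of the two nontrivial cosets of $D$ forms a single conjugacy class, while the nonidentity elements of $D$ split into $(|D|-1)/3$ classes of size $3$. Counting degrees (three linear characters and $(|D|-1)/3$ characters of degree $3$, all odd) shows every character has height $0$, and one obtains $k(B)=k_0(B)=(|D|+8)/3$ and $l(B)=3$, which is item~(6). The genuine difficulty lies entirely in the perfect-isometry input; granting it, the remaining computation is routine.
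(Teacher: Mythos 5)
Your proposal is correct and follows essentially the same route as the paper, which likewise derives item (1) from Theorem~\ref{mainthm} together with Puig's theorem on nilpotent blocks, quotes Brauer and Olsson for the maximal class cases, and reduces the homocyclic case with $e(B)=3$ to the Usami--Puig perfect isometry with $\mathcal{O}[D\rtimes C_3]$. The only difference is that you spell out the routine conjugacy-class and degree count for $D\rtimes C_3$, which the paper leaves as ``can be calculated easily''.
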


\end{document}